\documentclass{amsart}
\usepackage{amssymb} 
\usepackage{amsmath} 
\usepackage{mathrsfs} 
\usepackage{eufrak}
\usepackage{amscd}
\usepackage{amsbsy}
\usepackage{comment}
\usepackage{hyperref}
\usepackage{mathtools}
\usepackage[matrix,arrow]{xy}

\usepackage{esint}
\usepackage[letterpaper, margin=1.1in]{geometry}
\usepackage{fancyhdr}
\usepackage[shortlabels]{enumitem}
\usepackage{xcolor}
\def\Xint#1{\mathchoice
{\XXint\displaystyle\textstyle{#1}}%
{\XXint\textstyle\scriptstyle{#1}}%
{\XXint\scriptstyle\scriptscriptstyle{#1}}%
{\XXint\scriptscriptstyle\scriptscriptstyle{#1}}%
\!\int}
\def\XXint#1#2#3{{\setbox0=\hbox{$#1{#2#3}{\int}$ }
\vcenter{\hbox{$#2#3$ }}\kern-.6\wd0}}

\def\dashint{\Xint-}

\newtheorem{thm}{Theorem}[section]

\newtheorem{remark}[thm]{Remark}

\newtheorem*{thm*}{Theorem}

\theoremstyle{definition}

\newcommand{\R}{{\mathbb R}}

\newcommand{\inv}{^{-1}}

\newcommand{\ep}{\epsilon}

\newcommand{\Rm}{\mathrm{Rm}}
\newcommand{\Rc}{\mathrm{Rc}}
\newcommand{\Vol}{\mathrm{Vol}}
\newcommand{\inj}{\mathrm{inj}}

\newcommand{\dV}{\mathrm{d}V}

\newcommand{\RNum}[1]{\uppercase\expandafter{\romannumeral #1\relax}}

\let\epsilon\varepsilon

\begin{document}

\setlength\parindent{15pt}

\vspace{-3em}

        \author{Albert Chau$^1$}
        \address{Department of Mathematics,
                The University of British Columbia, Room 121, 1984 Mathematics
                Road, Vancouver, B.C., Canada V6T 1Z2} \email{chau@math.ubc.ca}

        \thanks{$^1$Research
                partially supported by NSERC grant no. \#327637-06}

        \author[A. Martens]{Adam Martens}
        \address{Department of Mathematics, The University of British Columbia,
1984 Mathematics Road, Vancouver, B.C.,  Canada V6T 1Z2}
\email{martens@math.ubc.ca}


\begin{abstract}
In \cite{ChauMartens} the authors proved the long-time existence of Ricci flow starting from complete bounded curvature Riemannian manifolds with scale-invariant integral curvature bounded by a dimensional constant times the inverse of the Sobolev constant.   We generalize this result by replacing the bounded curvature assumption with the assumption that $g$ is only equivalent to a complete bounded curvature metric $h$ while satisfying a Morrey-type condition on the gradient of $g$ relative to $h$: a local integral condition on the covariant derivative $\nabla_h g$. The Morrey-type condition was first considered in \cite{LeeLiu} in the context of Ricci flow on non-compact manifolds, and in particular allows the possibility for $g$ to have unbounded curvature on $M$.  As in \cite{ChauMartens}, our long-time solution enjoys curvature decay estimates implying in particular that  $M$ is diffeomorphic to $\mathbb{R}^n$.

\end{abstract}

        \bibliographystyle{amsplain}
\title{A note on Ricci flow from small curvature concentration and a Morrey-type condition}

\maketitle

\pagestyle{plain}

\vspace{-3em}

\section{Introduction}

In this note, we investigate the Ricci flow
\begin{equation}\label{RF}
                \left\{
                \begin{aligned}
                        \frac{{\partial }}{{\partial t}}g(t) &= -2 \text{Rc}_{g(t)}, \\
                        g(0) &= g
                \end{aligned}
                \right.
\end{equation}
on an initial complete $n$-dimensional Riemannian manifold $(M^n,g)$ under the initial assumptions of a global $L^2$-Sobolev inequality 
\begin{equation}\label{Sob}
    \left( \int_M |u|^{\frac{2n}{n-2}}\, \dV_g \right)^{\frac{n-2}{n}}
\;\le\;
C_g \int_M |\nabla u|_g^2\,\dV_g
\end{equation}
for all compactly supported smooth functions $u\in C^\infty_0(M)$, and also that the \textit{curvature concentration}, i.e., scale-invariant integral curvature, is relatively small compared to the Sobolev constant $C_g$:
\begin{equation}\label{CC}
    \left( \int_M | \Rm |_g^{\frac{n}{2}}\,\dV_g \right)^{\frac{2}{n}}
\le
\sigma(n,C_g).
\end{equation}
Here $\sigma$ is a constant which is relatively small compared to $C_g$ in a way we will make precise.  Here and in what follows $|\Rm|_g$ denotes the scalar norm relative to $g$ of the Riemann curvature tensor $\Rm(g)$, $\mathrm{d}V_g$ denotes the volume form relative to $g$, and we also define $\|\Rm\|_g := \sup_M |\Rm|_g$.  In general, given a tensor $T$ on $M$ we will define the quantities $|T|_g$ and $\|T\|_g$ similarly.

The conditions \eqref{Sob} and \eqref{CC} above pair naturally from a Ricci flow perspective, as we now briefly describe.  Together, the conditions imply a bound on Perelman's $\nu$-functional at time $t=0$, and this bound is generally preserved along the Ricci flow (at least for complete bounded curvature solutions).  Now in many situations, the curvature concentration can be shown to grow slowly along the flow, and this allows one to convert the bound on $\nu(t)$ back to a comparable Sobolev inequality at time $t$.  In this sense, the conditions \eqref{Sob} and \eqref{CC} are roughly preserved along the flow.  This mechanism can, under certain conditions, lead to long-time existence results for Ricci flow with parabolic control of the solution. Unfortunately, the conditions \eqref{Sob} and \eqref{CC} alone are (so far, at least) not sufficient to guarantee the existence of the Ricci flow solution and (semi)preservation of the assumed conditions; for this one needs to assume  some \textit{other} condition on the initial manifold (which may well be superfluous). The following theorem gives a unified formulation of results of this type appearing in several works under various such other conditions. In the cited works the statements appear in slightly different forms according to the context and applications there.

\begin{thm*}[\cite{ChenEric, ChauMartens, ChanChenLee, ChanHuangLee,  MartensScalar,  LeeLee}]
For any $n,A>0$, there exists $C_0(n,A),\sigma(n,A)>0$ such that the following holds. Let $(M^n,g)$ be a complete n-dimensional Riemannian manifold satisfying all the following estimates:

\,

\begin{enumerate}[(i)]
\item  
Condition \eqref{Sob} holds with $C_g=A$.

\item  
Condition \eqref{CC} holds with $\sigma(n, C_g)=\sigma(n, A)$.
\item 
Any one of the conditions (iii)' \cite{ChenEric, ChauMartens, ChanChenLee, ChanHuangLee,  MartensScalar,  LeeLee} below
holds.
\end{enumerate}

\,

 Then there exists a complete long-time Ricci flow $g(t)$, $t\in [0,\infty)$ with initial condition $g(0)=g$ which satisfies all the following scaling-invariant estimates on the solution:
\begin{equation}\label{generalconclusions}
\begin{cases}
&\|\Rm\|_{g(t)}\leq C_0 \ep/ t, \\
&\inj_{g(t)}\geq C_0\inv \sqrt t,\\
&\bar\nu(M,g(t))\geq -C_0, \text{ and}\\
&\displaystyle \left(\int_{M} |\Rm|_{g(t)}^{\frac{n}{2}}\,\dV_{g(t)}\right)^{\frac{2}{n}}\leq C_0\ep.
\end{cases}
\end{equation}

\end{thm*}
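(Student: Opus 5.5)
The plan is to treat condition (iii)' as a device for producing a short-time solution with enough a priori control that the argument can be run. The proof itself rests on $(i)$ and $(ii)$ through Perelman's $\nu$-functional and a continuity (bootstrap) argument.

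\textbf{Step 1: short-time existence.} Using (iii)', produce a complete Ricci flow $g(t)$ on some interval $[0,T_0)$. Under a bounded curvature hypothesis this is Shi's theorem. Under weaker hypotheses we approximate: either by exhaustion with conformal or cut-off modifications, or by flowing a bounded curvature metric equivalent to $g$. We then need local estimates uniform in the approximation.

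\textbf{Step 2: from Sobolev to $\nu$.} Conditions $(i)$ and $(ii)$ give a lower bound on $\bar\nu(M,g)\ge -C(n,A)$. To see this, insert the Sobolev inequality into the $\mathcal W$-functional: the scalar curvature term is absorbed via H\"older,
\[
\int R u^2 \le \|\Rm\|_{L^{n/2}}\|u\|_{L^{2n/(n-2)}}^2 \le \sigma A\int|\nabla u|^2 ,
\]
and the log-Sobolev inequality follows from Sobolev by Jensen. By Perelman's monotonicity, valid for complete flows with bounded curvature on compact subintervals, we get $\bar\nu(M,g(t))\ge -C$.

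\textbf{Step 3: bootstrap.} Let $T$ be the supremum of times $\tau$ such that on $(0,\tau]$ we have $|\Rm|\le \ep/t$ and $\|\Rm\|_{L^{n/2}(g(t))}\le 2\sigma$, with $\ep$ to be chosen. On this interval:
\begin{enumerate}[(a)]
\item The $\nu$ bound together with $|\Rm|\le \ep/t$ gives a uniform local volume noncollapsing $\Vol B_{g(t)}(x,\sqrt t)\ge c\, t^{n/2}$. By Cheeger--Gromov--Taylor this yields $\inj\ge C^{-1}\sqrt t$.
\item The evolution of $|\Rm|^{n/2}$ is
\[
\partial_t|\Rm|^{n/2}\le \Delta|\Rm|^{n/2}-c|\nabla|\Rm|^{n/4}|^2+C|\Rm|^{n/2+1}.
\]
Integrating, we control the reaction term by
\[
\int|\Rm|^{n/2+1}\le \|\Rm\|_{L^{n/2}}\,\bigl\||\Rm|^{n/4}\bigr\|_{L^{2n/(n-2)}}^2 .
\]
A Sobolev inequality at time $t$, recovered from the $\nu$ bound and small $L^{n/2}$ curvature, lets the gradient term absorb this term. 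Hence $\int|\Rm|^{n/2}$ is essentially nonincreasing, which improves $2\sigma$ to $C\sigma$, comparable to $C_0\ep$.
\item An $\ep$-regularity or Moser iteration in parabolic balls of size $\sqrt t$, using the local Sobolev inequality, turns small local $L^{n/2}$ curvature into $|\Rm|\le C\sigma/t$, which improves the pointwise bound.
\end{enumerate}
Choosing $\sigma$ small closes the bootstrap, so $T=T_0$. Since the curvature is then bounded at $T_0$, the flow extends, giving $T_0=\infty$.

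\textbf{Main obstacle.} The main difficulty is justifying the integration by parts and Perelman monotonicity on a noncompact manifold near $t=0$, where the curvature is not bounded. This is exactly the role of (iii)'. I would first try using cut-off functions of $g(t)$-distance, controlled by the $\ep/t$ curvature bound, so that boundary terms vanish. For $t$ near $0$ I would fall back on the uniform equivalence supplied by (iii)'.
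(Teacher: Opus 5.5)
The paper gives no proof of this statement; it summarizes six cited results. It does explain how those proofs go, and measured against that your outline has a genuine gap: the bootstrap in Step 3 never starts.

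\textbf{The bootstrap is not initialized.} For the set of admissible $\tau$ to be nonempty you need $\limsup_{t\to 0}\int_M|\Rm|^{n/2}_{g(t)}\,\dV_{g(t)}$ to be small. Nothing in Steps 1--2 gives this on a noncompact $M$. Local smooth convergence $g(t)\to g$ says nothing about a global integral, which a priori could be infinite for every $t>0$.

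Step 3(b) cannot supply this smallness. Absorbing $\int|\Rm|^{n/2+1}$ uses a time-$t$ Sobolev inequality, and you only obtain that from smallness of the time-$t$ curvature concentration, which is the very thing in question. Your cut-off remedy leaves an error term $\int|\nabla\phi|^2|\Rm|^{n/2}_{g(t)}$. That term vanishes as the cut-off scale grows only if $\int_M|\Rm|^{n/2}_{g(t)}$ is already known to be finite. The same issue reappears for openness of the bootstrap at positive times.

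This no-instantaneous-jump property is exactly what the paper singles out as the key challenge of the Global-to-Global approach. In \cite{ChauMartens}, and in Theorem \ref{upperContLp} here, it is not obtained by integrating the evolution inequality over $M$. It comes from a heat-equation comparison:
\begin{itemize}
\item a time-weighted regularization of $|\Rm|^p$ is a subsolution of the heat equation;
\item a maximum principle dominates it by a solution $u$ built from Dirichlet problems on an exhaustion, whose total integral is controlled.
\end{itemize}
This yields $\int_M|\Rm|^{p}_{g(t)}\,\dV_{g(t)}\le \exp\bigl(C\int_0^t\|\Rm\|_{g(s)}\,ds\bigr)\int_M|\Rm|^{p}_{g}\,\dV_g$. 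Only after that can your bootstrap begin.

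\textbf{The unbounded-curvature cases.} For the hypotheses (iii)' without bounded curvature (those of Chan--Huang--Lee, Martens and Lee--Lee) the gap is more serious.
\begin{itemize}
\item Step 1 is then a placeholder for what is itself a hard existence theorem.
\item Your suggested approximations do not rescue the global argument. Gluing $g$ to another metric outside a compact set, or conformally modifying it, destroys (i)--(ii) for the approximants, so the bootstrap cannot be run on them.
\item Even for a limit flow, the comparison factor $\exp\bigl(C\int_0^t\|\Rm\|_{g(s)}\,ds\bigr)$ is infinite if one only knows $|\Rm|\le C\ep/s$.
\end{itemize}
This is why, for its own new hypothesis, the paper needs the time-integrable bound $C_0t^{a-1}$ supplied by Lee--Liu. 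It then applies \cite{ChauMartens} to the bounded-curvature metric $g(t)$ at a small positive time.

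It is also why the cited unbounded-curvature results are proved Local-to-Global instead. One proves localized short-time existence with estimates on a fixed scale, applies it to $R^{-2}g$ for $R\to\infty$, and passes to a diagonal limit. The volume-ratio bound on all scales enters through the Expanding Balls Lemma in the covering arguments. Your outline never uses that hypothesis, which shows this part of the proof is missing rather than merely technical.
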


The following is a summary of the variety of condition (iii)'s appearing in the Theorem above, listed in chronological order.  
 Some of the conditions below are clear generalizations of others.

   \[
    (M^n,g) \text{ is \textit{Asymptotically Euclidean}}\footnote{We direct the reader to \cite{ChenEric} directly for the definition of asymptotically Euclidean used in this context.}\tag{iii' \cite{ChenEric}}
    \]
    
    \[
    \Rc_g\geq 0, \; \text{ and } \; (M^n,g) \text{ has bounded curvature}  \tag{iii' \cite{ChanChenLee}}
    \]
    \[
    \inf_{x\in M, r>0}\frac{\Vol_{g}B_g(x,r)}{r^n}\leq \mathfrak{v}, \; \mathrm{Scal}_g\geq 0, \; \text{and} \; \Rc_g>-\infty \tag{iii' \cite{ChanHuangLee}}
    \]

      \[
    (M^n,g) \text{ has bounded curvature}\tag{iii' \cite{ChauMartens}}
    \]

    \[
    \inf_{x\in M, r>0}\frac{\Vol_{g}B_g(x,r)}{r^n}\leq \mathfrak{v} \; \text{ and } \; \Rc_g>-\infty \tag{iii' \cite{MartensScalar}}
    \]

    \[
    \inf_{x\in M, r>0}\frac{\Vol_{g}B_g(x,r)}{r^n}\leq \mathfrak{v} \tag{iii' \cite{LeeLee}}
    \]
  We remark that the the estimates of the flow in \eqref{generalconclusions} may be used to apply the main result of \cite{ChauMartens} to any positive time (and thus, bounded curvature) metric $g(t)$  to obtain faster than $c/t$ curvature decay of the long-time Ricci flow
\[
\lim_{t\to\infty} 
\left( t\, \| \Rm\|_{g(t)}\right)
=0.
\]
This implies $\frac{1/4}{t}$-curvature decay of the Ricci flow solution starting from some positive time metric $g(T)$. This, combined with the injectivity radius estimate in \eqref{generalconclusions} implies – by the main result of \cite{HuangPeng} – that $M^n$ is diffeomorphic to $\mathbb{R}^n$.\\
 
The above results have been approached from two noticeably different perspectives. The perspective taken in \cite{ChanChenLee, ChanHuangLee,  MartensScalar,  LeeLee} can be described as being \begin{it}Local to Global\end{it}.  The approach begins by establishing \textit{localized} results, in which the hypotheses of the theorem ((i) and (ii) in particular) are assumed to hold only locally (that is, up to a fixed scale). In this case, one obtains short-time existence of the Ricci flow together with the desired local estimates on a correspondingly small spatial scale and time interval. Now assuming that $g$ satisfies  the hypotheses globally (that is, on all scales), scaling invariance allows one to apply the localized result to the rescaled metrics $R^{-2}g$ for large $R$, yielding a Ricci flow solution defined up to a uniform time $T>0$. Rescaling back produces a sequence of Ricci flow solutions with initial metric $g$ defined up to time $R^2T$, and the Arzel\`a--Ascoli theorem together with a diagonal argument yields a long-time Ricci flow solution emerging from the initial metric $g$ satisfying the global estimates stated in \eqref{generalconclusions}.   The volume ratio bound on all scales in this approach is required by the use of covering arguments  wherein one must apply a version of the ``Expanding Balls Lemma" (see {\cite[Lemma 3.7]{Fei}}, {\cite[Lemma 2.2]{LeeTam}}, {\cite[Lemma 4.2]{Martens}}, and {\cite[Lemma 2.3]{MartensScalar}} for the development of this lemma).   
  
    In contrast, the perspective taken in  \cite{ChenEric, ChauMartens} can be described as \begin{it} Global to Global \end{it} where typically a Ricci flow solution emerging from $g$ is known to exist and one works directly with global quantities over $M$ at \textit{each} time $t$.    A key challenge here is guaranteeing that the global curvature concentration is upper continuous in time and does not instantly `jump'.  Once this is established, it is relatively standard to show that the global curvature concentration must in fact be decreasing quantity along the flow, and a $C/t$ bound as in \eqref{generalconclusions} can be subsequently obtained via a Moser Iteration argument.

  The main purpose of this paper is to generalize the result in \cite{ChauMartens} by weakening the assumption that $g$ is complete with bounded curvature to assuming only that $g$ is bi-Lipschitz equivalent to a complete bounded curvature metric $h$ while also satisfying a Morrey-type integral condition on the gradient of $g$ relative $h$ as first considered in \cite{LeeLiu} in the context of Ricci flow.

\begin{thm}\label{thmmain}
For $n \ge 3$, there exists a dimensional constant $\delta(n) \geq \frac{1}{10^6 n^2}$ such that the following holds.  Let $(M^n,g)$ be a complete (not necessarily bounded curvature) Riemannian manifold satisfying all the following estimates:

\begin{enumerate}[(i)]
\item \label{SobMain} (\textbf{Sobolev Inequality})    
\[
\left( \int_M |u|^{\frac{2n}{n-2}}\, \dV_g \right)^{\frac{n-2}{n}}
\;\le\;
C_g \int_M |\nabla u|^2\,\dV_g ,
\]
for all compactly supported smooth functions $u$;

\item  \label{CCmain}(\textbf{Relatively Small Curvature Concentration})   
\[
\left( \int_M |\Rm|_g^{\frac{n}{2}}\,\dV_g \right)^{\frac{2}{n}}
\;\le\;
\delta(n)\, C_g^{-1};
\]
and

\item \label{morreyiii}(\textbf{Equivalence and Morrey–type condition})  
There exists a complete bounded curvature metric $h$ on $M$  and 
constants $\Lambda_0>1$, $L_0,\eta,r_0>0$, and $p\ge 1$ such that
\[
\left\{
\begin{aligned}
\Lambda_0^{-1}\, & h \;\le\; g \;\le\; \Lambda_0\, h 
\quad &&\text{ on } M, \\[6pt]
\dashint_{B_h(x_0,r)} |\widetilde \nabla g|_h^p& \,\dV_h
\;\le\;
L_0\, r^{-p+\eta}
\quad &&\text{for all } x_0\in M,\; 0<r<r_0,
\end{aligned}
\right.
\]
where $\widetilde{\nabla}$ denotes the covariant derivative with respect to $h$.

\end{enumerate}

Then there exists a complete long-time Ricci flow $g(t)$, $t\in [0,\infty)$, with initial condition $g(0)=g$ satisfying

\begin{enumerate}[(a)]
    \item \label{thma}Scaling invariant curvature estimates with explicit estimates: 
    \[
    \|\Rm\|_{g(t)}\leq \frac{(5e)^{n/2}}{t}
    \]
    \item Global Sobolev inequality for all times:
    \[
        \left( \int_M |u|^{\frac{2n}{n-2}}\, \dV_{g(t)} \right)^{\frac{n-2}{n}}
\;\le\;
(1000 n) C_g \int_M |\nabla u|_{g(t)}^2\,\dV_{g(t)}
\]
for all compactly supported smooth functions $u$. 
\item Decreasing curvature concentration:
\[
\left( \int_M | \Rm|_{g(t)}^{\frac{n}{2}}\,\dV_{g(t)} \right)^{\frac{2}{n}}\leq \left( \int_M  | \Rm|_{g(s)}^{\frac{n}{2}}\,\dV_{g(s)} \right)^{\frac{2}{n}}
\]
whenever $t\geq s\geq 0$. 
\item \label{thmd}Faster than $c/t$ curvature decay at infinity:
\[
\lim_{t\to\infty} 
\left( t\, \|\Rm\|_{g(t)}\right)
=0.
\]
\end{enumerate}
In particular, $M^n$ is diffeomorphic to $\mathbb{R}^n$.
\end{thm}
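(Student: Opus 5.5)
The plan is to follow the Global to Global strategy of \cite{ChauMartens}, but to run it on smooth bounded curvature approximations of $g$ rather than on $g$ itself. The Morrey-type condition in (iii) is exactly what lets us build such approximations. The first step is therefore an approximation argument. Starting from the bi-Lipschitz equivalence with $h$ and the Morrey bound on $\widetilde\nabla g$, we use the Ricci--DeTurck / harmonic map heat flow machinery of \cite{LeeLiu}. Let $\tilde g(t)$ be the $h$-flow starting from $g$. The Morrey condition yields a short-time solution on $[0,T_0]$ with $T_0=T_0(n,\Lambda_0,L_0,\eta,p,r_0,h)>0$. This solution stays uniformly $\Lambda$-equivalent to $h$ and satisfies $|\Rm|_{\tilde g(t)}\le C t^{-1+\eta/(2p)}$, or at least $o(1/t)$ with an integrable improvement. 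Pulling back by the DeTurck diffeomorphisms gives a Ricci flow $g(t)$ on $(0,T_0]$. Each $g(t)$ with $t>0$ is complete with bounded curvature, and $g(t)\to g$ locally smoothly in the $C^0$/$W^{1,p}_{loc}$ sense as $t\to 0$. The integrable curvature bound $\int_0^{T_0}\|\Rm\|\,dt<\infty$ is the key output here: it gives uniform equivalence of $g(t)$ with $g$ and controls the distortion of volumes and distances.

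The second step is to show that the hypotheses (i) and (ii) persist with slightly worse constants to small positive times $t$, and then to apply the bounded curvature result of \cite{ChauMartens} starting from $g(t)$. That result gives a long-time flow, and by uniqueness of bounded curvature flows it extends $g(t)$. The heart of the matter is this continuity at $t=0$.
\begin{itemize}
\item For the Sobolev inequality, we would use Perelman's $\nu$-functional route. Conditions (i) and (ii) bound $\bar\nu(g)$ from below. We then want $\bar\nu(g(t))$ nearly monotone down to $t=0$. This can be obtained by approximating with test functions and using the uniform equivalence from step one, or more robustly by showing that the Sobolev constant of $g(t)$ converges as $t\to0$, using $\Lambda$-equivalence plus smallness of $\int_0^t\|\Rc\|\,ds$.
\item For the curvature concentration, the danger is an instantaneous jump. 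We would show $\limsup_{t\to0}\int_M|\Rm|^{n/2}_{g(t)}\,\dV_{g(t)}\le \int_M|\Rm|_g^{n/2}\,\dV_g$. The tool is a localized evolution inequality $\partial_t|\Rm|^{n/2}\le\Delta|\Rm|^{n/2}+c(n)|\Rm|^{n/2+1}$ with cutoff functions, where the gradient term is absorbed via the Sobolev inequality as in \cite{ChauMartens}. The term $\int|\Rm|^{n/2+1}$ is handled by the Sobolev inequality together with smallness $\delta(n)$. Near $t=0$ it is handled by the integrable curvature bound. Convergence on compact sets comes from local smooth convergence away from the initial time, combined with Morrey-type $W^{2,p}$-ish control; failing smooth convergence, we would use weak lower semicontinuity in the opposite direction together with a monotonicity argument on $[s,t]$ with $s\to0$.
\end{itemize}

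Once step two is in place, the third step is short. The monotonicity of $\int|\Rm|^{n/2}$ for bounded curvature flows under the smallness $\delta(n)C^{-1}$ follows from \cite{ChauMartens}, applied on $[s,\infty)$ and then letting $s\to0$. This gives (c) and (b), and Moser iteration gives (a) with the explicit constant $(5e)^{n/2}$. Applying the main result of \cite{ChauMartens} to $g(T)$ gives (d). Finally, (d) combined with the injectivity radius estimate and \cite{HuangPeng} yields that $M$ is diffeomorphic to $\mathbb{R}^n$.

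The main obstacle is step two, and specifically the upper semicontinuity of the global curvature concentration as $t\to0$. The initial metric has only $W^{1,p}$-type Morrey control, and $|\Rm|_g$ need not be bounded, so $g(t)\to g$ is not smooth up to $t=0$. We would attack this first by proving a local-in-space, global-in-time integral estimate: on balls $B_h(x,r)$, the quantity $\int\phi|\Rm|^{n/2}$ at time $t$ is at most its initial value plus $C\,t^{\eta'}$. We would then sum these estimates using the smallness $\delta(n)$ and the Sobolev inequality to control the tails uniformly.
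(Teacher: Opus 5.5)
Your outline has the same architecture as the paper: a Ricci flow $g(t)$ from \cite{LeeLiu} with $\|\Rm\|_{g(t)}\le C_0t^{-1+a}$; $C^0$-closeness of $g(t)$ to $g$, giving near-continuity of the Sobolev constant; upper continuity of the global $L^{n/2}$ curvature at $t=0$; then \cite{ChauMartens} applied to the bounded curvature metric $g(t_0)$ for small $t_0$, using the slack between $\delta(n)$ and the constant there. The gap is in the one genuinely new step, showing $\limsup_{t\to0}\int_M|\Rm|^{n/2}_{g(t)}\,\dV_{g(t)}\le\int_M|\Rm|^{n/2}_g\,\dV_g$.

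You partly misplace the difficulty. Since $g$ is smooth and the flow is smooth on $M\times[0,T]$, convergence on compact sets is automatic; the Morrey condition gives no $W^{2,p}$ control, and none is needed. The whole problem is uniform smallness of the tails $\int_{M\setminus K}|\Rm|^{n/2}_{g(t)}\,\dV_{g(t)}$ for small $t>0$, and your mechanism does not deliver this.
\begin{itemize}
\item Ball-wise estimates of the form ``initial value plus $Ct^{\eta'}$'' cannot be summed over the infinitely many balls covering the non-compact $M$. The additive errors diverge, and a bounded-overlap cover also multiplies the main term by the overlap number, which would eat the slack.
\item With a single cutoff on $B_{2R}$ and $R\to\infty$, the error is of size $R^{-2}\int_0^t\int_{B_{2R}\setminus B_R}|\Rm|^{n/2}_{g(s)}\,\dV_{g(s)}\,ds$. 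You have no a priori reason for this to be finite. The only global information at positive times is $|\Rm|_{g(s)}\le C_0s^{-1+a}$, whose $n/2$-th power need not be integrable in $s$ near $0$, and a priori $h$-balls may have exponential volume growth.
\item Using the Sobolev inequality and the smallness of $\delta(n)$ to control these tails is circular. That absorption argument presupposes that the concentration at time $t$ is already small, which is what you are trying to prove.
\end{itemize}

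The missing ingredient is a global comparison argument that gives a \emph{multiplicative} bound with no a priori integrability at positive times. This is Theorem \ref{upperContLp}, which gives $\int_M|\Rm|^p_{g(t)}\,\dV_{g(t)}\le e^{C_0(16p+n)t^a/a}\int_M|\Rm|^p_g\,\dV_g$. It works in three steps.
\begin{itemize}
\item The $t^{-1+a}$ bound makes $e^{-(16C_0p/a)t^a}(|\Rm|^2+v_\varepsilon^2)^{p/2}$ a subsolution of the heat equation.
\item This is compared with a solution $u$ of the heat equation with the same initial data. The solution $u$ is built by Dirichlet exhaustion, so the boundary terms have a sign and $\int_Mu(t)\,\dV_{g(t)}\le e^{(nC_0/a)t^a}\int_Mu(0)\,\dV_g$.
\item The comparison itself is justified by the Lee--Tam local maximum principle \cite{LeeTam}, applied to parabolic rescalings of the difference and letting the scale go to infinity. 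It uses only local hypotheses such as $\Rc\le g(t)/t$ and no growth control at spatial infinity.
\end{itemize}
Some device of this type, or a worked-out substitute, is needed before your Step 2 goes through. Both (c) at $s=0$ and the application of \cite{ChauMartens} depend on it.
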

\begin{remark}
By classical results of Shi \cite{Shi} for Ricci flow of complete bounded curvature metrics, we may assume  without loss of generality in Theorem \ref{thmmain} that all covariant derivatives of the Riemann curvature tensor of $h$ are bounded on $M$.  In other words, $\|\nabla^k_h Rm\|_h<\infty$ for each $k\geq 0$.  
\end{remark}

\begin{remark}
    We have chosen to state the conclusions of Theorem \ref{thmmain} to be more similar to the main result of \cite{ChauMartens} rather than the general theorem given above. In particular, the conclusions as in \eqref{generalconclusions} (with the general $\epsilon$ and the injectivity estimate) could be mimicked here by modifying the constant $C_0(n)$.\end{remark}

 Under condition \ref{morreyiii} alone, it was proved in \cite{LeeLiu} that there exists a short-time complete solution $g(t)$ to the Ricci De-Turck flow relative to $h$ satisfying a curvature bound of the form 
    \[
     \|\Rm\|_{g(t)}\leq \frac{C}{t^{1-a}}
    \]
for and constants $C>0$ and $a>0$ (depending on $n,\Lambda_0,L_0,\eta, r_0,p,h$).  In particular, the bound is integrable in $t$ to $t=0$.  This will play a crucial role in our arguments which are in line with the Global to Global type described above.

This paper is organized as follows.  In Section \ref{SectionLp} we show that the $L^p$ curvature is upper continuous along any complete Ricci flow provided it is initially bounded and that the curvature is suitably integrable in time at $t=0$.  In Section \ref{SectionProof} we apply this result to prove Theorem \ref{thmmain} by adapting the proof of the main result of \cite{ChauMartens} in this setting and using the result in \cite{LeeLiu} . \\

\section{Upper-Continuity of $L^p$ curvature}\label{SectionLp}

In this section, we prove the following theorem saying that under a suitable integrability condition in time at $t=0$, the total $L^p$-curvature (at worst) grows continuously, provided it is initially bounded.

    \begin{thm}\label{upperContLp}
        For any $n\geq 3$, $C_0>0$ and $a\in (0,1)$, there exists $\widetilde T(n,a,C_0)>0$ such that the following holds. Let $(M,g(t))$, $t\in [0,T]$ be any complete Ricci flow (not assumed bounded curvature), which satisfies the integrable to time-0 curvature bound
        \begin{equation}\label{integrableCurveBoundLemma}
        \|\Rm\|_{g(t)}\leq \frac{C_0}{t^{1-a}}
        \end{equation}
        for all $t\in (0,T]$. Then the global $L^p$-curvature ($p\geq 1$) of the solution $g(t)$ satisfies
        \[
        \int_M |\mathrm{Rm}|_{g(t)}^p\,\dV_{g(t)}\leq e^{\frac{C_0(16p+n)}{a}t^a}\int_M |\mathrm{Rm}|_{g(0)}^p\,\dV_{g(0)}
        \]
        for all $t\in [0,\min\{T,\widetilde T\}]$, provided the RHS above is finite.
    \end{thm}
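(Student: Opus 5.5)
The plan is a localized energy estimate for $u=|\Rm|^p$ with a cutoff chosen so that the cutoff error is a small multiple of the same weighted integral. Gronwall then closes the estimate, and we let the cutoff radius tend to infinity. No bounded-curvature or volume-growth assumption should be needed.

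\textbf{Step 1: uniform equivalence of metrics.} Integrating $|\partial_t g|\le 2|\Rc|\le 2(n-1)\|\Rm\|_{g(t)}$ against \eqref{integrableCurveBoundLemma} gives $\int_0^t C_0 s^{a-1}\,ds=C_0t^a/a$. Choosing $\widetilde T(n,a,C_0)$ small, all $g(t)$ with $t\le \min\{T,\widetilde T\}$ are uniformly equivalent to $g(0)$, say $\frac12 g(0)\le g(t)\le 2g(0)$.

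Fix $x_0$ and $R>0$, and let $\eta$ be a Lipschitz function of $d_{g(0)}(x_0,\cdot)$ with $\eta=1$ on $B_{g(0)}(x_0,R)$, support in $B_{g(0)}(x_0,2R)$, and $|\nabla\eta|_{g(0)}\le 1/R$. Set $\psi=\eta^2$. Then $\psi$ is compactly supported and, by the equivalence,
\[
\frac{|\nabla\psi|_{g(t)}^2}{\psi}\le \frac{C(n)}{R^2}\quad\text{on }\{\psi>0\}.
\]
Only first derivatives of $\psi$ will appear, so the lack of control on Christoffel symbols of $g(t)$ relative to $g(0)$ causes no harm.

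\textbf{Step 2: evolution inequality and its integration.} With $f=|\Rm|^2+\epsilon$ (regularized to avoid zeros) and $q=p/2$, the standard $\partial_t|\Rm|^2\le\Delta|\Rm|^2-2|\nabla\Rm|^2+16|\Rm|^3$ and Kato's inequality $|\nabla f|^2\le 4f|\nabla\Rm|^2$ give
\[
\partial_t f^q\le \Delta f^q-\gamma_p\, f^{q-1}|\nabla \Rm|^2+8p\,|\Rm|\,f^q,\qquad \gamma_p\ge 0,
\]
with $\gamma_p>0$ for $p>1$ (the borderline is exactly $p=1$, where $4q(1-q)=2q$). Since $\partial_t \dV=-\mathrm{Scal}\,\dV$ and $|\mathrm{Scal}|\le n|\Rm|$ (up to the normalization giving the constant $16p+n$), we get
\[
\frac{d}{dt}\int\psi f^q\,\dV_{g(t)} \le \int \psi\Delta f^q+(16p+n)\|\Rm\|_{g(t)}\int\psi f^q .
\]
Integrating by parts, $\int\psi\Delta f^q=-q\int f^{q-1}\langle\nabla\psi,\nabla f\rangle$. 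Using $|\nabla f|\le 2f^{1/2}|\nabla\Rm|$ and Cauchy--Schwarz with the weight $\psi$, this is bounded by
\[
\gamma_p\int\psi f^{q-1}|\nabla\Rm|^2+C(p)\int\frac{|\nabla\psi|^2}{\psi}f^q .
\]
The first term is absorbed by the good term. The second is at most $\frac{C}{R^2}\int\psi f^q$ by Step 1. Hence, with $I(t)=\int\psi f^q\,\dV_{g(t)}$,
\[
I'\le\Bigl(C_0(16p+n)t^{a-1}+\frac{C}{R^2}\Bigr)I .
\]
Gronwall on $[s,t]$ then gives $I(t)\le e^{\frac{C_0(16p+n)}{a}t^a+Ct/R^2}I(0)$. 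All quantities are finite because $\psi$ has compact support and the flow is smooth on $M\times[0,T]$, so letting $s\to0$ is harmless.

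\textbf{Step 3: passing to limits.} Let $\epsilon\to0$ (dominated convergence on the compact support). Then let $R\to\infty$: $\psi\uparrow 1$, the factor $e^{Ct/R^2}\to1$, and $\int\psi|\Rm|_{g(0)}^p\le\int_M|\Rm|^p_{g(0)}$. Monotone convergence yields the claimed inequality for $p>1$. For $p=1$, apply the $p>1$ result and let $p\downarrow1$, using Fatou on the left. The right side converges provided $|\Rm|_{g(0)}^{p}$ is dominated, for instance by $|\Rm|+|\Rm|^{p_1}$ on the compact set used before $R\to\infty$; so it is cleaner to take $p\downarrow1$ before $R\to\infty$.

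\textbf{Main obstacle.} The hardest step is Step 2: absorbing the cutoff gradient term without any volume or curvature control at infinity. The choice $\psi=\eta^2$, which forces $|\nabla\psi|^2/\psi\lesssim R^{-2}$, is what makes the error term proportional to $I$ itself. The degenerate case $p=1$ needs the limiting argument described above.
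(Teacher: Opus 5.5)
Your evolution inequality for $f^q$ (including the Kato constant and the degeneracy at $p=1$) and the absorption of the $\nabla\Rm$ term are correct. The last line of Step 2 is wrong, though, and it is the step that carries the whole difficulty.

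\textbf{The cutoff error is not proportional to $I$.} From $|\nabla\psi|_{g(t)}^2/\psi=4|\nabla\eta|_{g(t)}^2\le C/R^2$ you only get
\[
\int\frac{|\nabla\psi|^2}{\psi}f^q\,\dV_{g(t)}\le\frac{C}{R^2}\int_{B_{g(0)}(x_0,2R)\setminus B_{g(0)}(x_0,R)}f^q\,\dV_{g(t)} .
\]
You do not get $\frac{C}{R^2}\int\psi f^q\,\dV_{g(t)}$. The pointwise bound is by a constant, not by a multiple of $\psi$, and it sits on the annulus where $\psi\to0$. No compactly supported cutoff can do better: $|\nabla\psi|\le C\psi/R$ says $\log\psi$ is Lipschitz, which forces $\psi>0$ everywhere.

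So what you actually have is $I'\le A(t)I+\frac{C}{R^2}\int_{B_{2R}\setminus B_R}f^q\,\dV_{g(t)}$, and Gronwall does not close. To let $R\to\infty$ you would need an a priori bound such as $\int_0^t\int_{B_{2R}}|\Rm|^p_{g(s)}\,\dV_{g(s)}\,ds=o(R^2)$. That is essentially finiteness of the $L^p$ curvature at positive times, which is what you are trying to prove. The hypotheses give no such control:
\begin{itemize}
\item $|\Rm|_{g(s)}\le C_0s^{a-1}$ is only a pointwise bound;
\item volumes of $g(0)$-balls may grow exponentially;
\item for $p(1-a)\ge1$ the pointwise bound is not even integrable in time at $s=0$.
\end{itemize}
An exponential weight $e^{-\lambda\rho}$ would make the error proportional to $I$. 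But then you must know $I(t)<\infty$ and that boundary terms vanish at infinity. That again needs a priori integral control at positive times, at least for the small $\lambda$ you must reach at the end. Your $p=1$ argument relies on the $p>1$ case, so it has the same gap.

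\textbf{What is missing, and how the paper supplies it.} You need a mechanism that rules out $L^p$ curvature mass entering from infinity in short time, and it must not produce a cutoff error. The paper gets this from a comparison argument in three steps.
\begin{enumerate}
\item $e^{-(16C_0p/a)t^a}K_\varepsilon^p$ is a subsolution of the heat equation.
\item A global heat solution $u$ with initial data $K^p_\varepsilon(\cdot,0)$ is built by Dirichlet exhaustion. Since $u_j\ge0$ and $u_j=0$ on $\partial\Omega_j$, the boundary flux has a favorable sign. This gives $\int_M u\,\dV_{g(t)}\le e^{(nC_0/a)t^a}\int_M K^p_\varepsilon\,\dV_{g(0)}$ with no error term.
\item The pointwise inequality $e^{-(16C_0p/a)t^a}K^p_\varepsilon\le u$ on all of $M$ comes from the Lee--Tam local maximum principle. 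It needs only $\Rc_{g(t)}\le g(t)/t$, which is where $\widetilde T\le(nC_0)^{-1/a}$ and $\widetilde T\le\widehat T(n,1,3)$ enter. After parabolic rescaling it needs no growth assumptions at infinity.
\end{enumerate}
In your argument $\widetilde T$ is used only to get metric equivalence, which is a symptom that the control at infinity is missing.
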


\begin{proof}
For clarity of proof, we begin by defining 
\[
\widetilde T=\min\{(nC_0)^{-1/a},\widehat T(n,1,3)\},
\]
where $\widehat T$ is from {\cite[Theorem 1.1]{LeeTam}}. Also for brevity, write $g=g(0)$. Fix some $p\geq 1$ and without loss of generality, assume that $|\Rm|_{g(0)}^p$ is integrable on $(M^n, g)$. Let $v_0 : M \to \mathbb{R}$ be an integrable function on $(M^n, g)$ satisfying $0 < v_0 \le 1$. For any $\varepsilon \in (0,1]$, define $v_{\varepsilon}(x, t)$ to be the solution to the initial value problem
\[
\begin{cases}
\partial_t v_{\varepsilon} = \Delta_{g(t)} v_{\varepsilon},\\
v_{\varepsilon}(x, 0) = \varepsilon v_0(x).
\end{cases}
\]
Also define
\[
K_{\varepsilon}^2 := | \mathrm{Rm} |_{g(t)}^2 + v_{\varepsilon}^2.
\]

In {\cite[Lemma 2.1]{ChauMartens}} the following estimate was shown using the standard estimate for the evolution of the curvature tensor along Ricci flow:
\[
\partial_t K_{\varepsilon}^p \le \Delta_{g(t)} K_{\varepsilon}^p + 8 p K_{\varepsilon}^{p+1},
\]
which, after applying the curvature bound in \eqref{integrableCurveBoundLemma}, we can simplify to
\begin{equation}\label{evolOfKp}
\partial_t K_{\varepsilon}^p \le \Delta_{g(t)} K_{\varepsilon}^p + 16C_0p t^{-1+a}\, K_{\varepsilon}^{p}.
\end{equation}
Note, the addition of $v_{\varepsilon}^2$ above is only needed to prevent possible division by zero in deriving the above estimates in the case when $p<2$.  Thus if we define $\widetilde K_{\varepsilon}^p:=e^{-(16C_0p/a)t^{a}}K_{\varepsilon}^p$, then $\widetilde K_\epsilon^p$ is a subsolution to the time dependent heat equation:  

\begin{equation}\label{upperlowerK}
\partial \widetilde K_{\varepsilon}^p  \le  \Delta_{g(t)} \widetilde K_{i,\varepsilon}^p \text{   on } M\times[0, T].
\end{equation}

Now we fix a sequence of nested compact sets $\{\Omega_j\}_{j=0}^\infty$ exhausting $M$, and for each $j$, let $\phi_j: M \to [0,1]$ be a smooth cutoff function  supported in $\Omega_j$ and with $\phi_j \equiv 1$ in $\Omega_{j-1}$. For any $\varepsilon > 0$ and $j$ we denote by 
\[
u_{j} : \Omega_j \times [0,T] \to \mathbb{R}
\] 
the solution to the initial value Dirichlet boundary value problem
\[
\begin{cases}
\partial_t u_{j} = \Delta_{g(t)} u_{j}, & \text{on }  \Omega_t\times[0, T] \\
u_{j}(x,0) = \phi_j K_{\varepsilon}^p(x,0), & x \in \Omega_j\\
u_{j}(x,t) = 0, & x \in \partial \Omega_j.
\end{cases}
\]
Here the dependence of $u_j$ on $\varepsilon$ has been suppressed for simplicity and will not cause confusion for our purpose.  Just as in the proof of Proposition 2.2 in \cite{ChauMartens}, we obtain the evolution inequality
\[
\frac{\mathrm{d}}{\mathrm{d}t}\int_{\Omega_j} u_{j} \,\dV_{g(t)} \le n\int_{\Omega_j} u_{j}|\mathrm{Rm}|_{g(t)} \,\dV_{g(t)}\le nC_0 t^{-1+a} \int_{\Omega_j} u_{j} \,\dV_{g(t)},
\]
where we have used \eqref{integrableCurveBoundLemma} and the scalar curvature bound $|\mathrm{Scal}|\leq n|\Rm|$. Thus by comparison to the corresponding ODE, we have
\[
\int_{\Omega_j} u_{j}(x, t) \,\dV_{g(t)}\leq e^{(nC_0/a)t^{a}}\int_{\Omega_j} u_{j}(x, 0) \,\dV_{g(0)} \le e^{(nC_0/a)t^{a}}\int_{M} |\mathrm{Rm}|_{g(0)}^p  \,\dV_{g(0)} .
\]

By the classical maximum principle, the sequence $u_j (x, t)$ is non-decreasing in $j$ for any fixed $(x, t)$.  On the other hand, the sequence $u_j(x, t)$ must be bounded above on any compact set of $M$ independent of $j, t$ as otherwise the integral upper bound above would be violated by the classical Harnack inequality for linear parabolic equations in \cite{KrylovSafanov}.  It follows that $u_j(x, t)$ converges as $j\to \infty$ in $L^{\infty}_{loc}(M\times[0, T])$ to a limit $u(x, t)$, and by the parabolic Schauder estimates it follows that $u(x, t)\in C^{\infty}$ and solves

\[
\begin{cases}
\partial_t u= \Delta_{g(t)} u\; \text{ on }M\times[0, T] & \\
u_{j}(x,0) = K_{\varepsilon}^p(x,0), \text{ on }M,& \\
\end{cases}
\]
while satisfying
\begin{equation}\label{integrableu}
\int_{M} u(x, t) \,\dV_{g(t)} \le e^{(nC_0/a)t^{a}}\int_{M} K_{\varepsilon}^p(x,0)  \,\dV_{g(0)} .
\end{equation}
From the above equation for $u$ and \eqref{upperlowerK} we get that the function $\varphi^R(x,s):=\widetilde K_{\varepsilon}^p(x,Rs)-u(x,Rs)$ satisfies
\begin{equation}\label{heatphiR}
\begin{cases}
\partial_s \varphi^R\leq \Delta_{g_R(t)} \varphi^R \; \text{ on }M\times[0, T/R] & \\
\varphi^R(x, 0)=0, \text{ on }M& \\
\end{cases}
\end{equation}
for any $R>0$ where $g_R(x, t):=g(x, Rt)/R$ is also a solution to the Ricci $\eqref{RF}$ (with initial condition $g(0)/R$).
\,

For simplicity of notation, we will – for the remainder of the proof – assume that $T\leq \widetilde T$. Note that the integral bound \eqref{integrableCurveBoundLemma}, together with the bound $\widehat T\leq (nC_0)^{-1/a}$, implies the scale-invariant curvature bound
\[
\|\Rm\|_{g(t)}\leq \frac{1}{nt} \quad \implies \quad \Rc_{g(t)}\leq \frac{g(t)}{t}
\]
on $(0,T]$.  Now recall that we are assuming $T\leq \widetilde{T}\leq \widehat T$ where $\widehat T = \widehat T(n, 1, 3)$ is from the conclusion of the local maximum principle in {\cite[Theorem 1.1]{LeeTam}} with $\alpha, l$ there being $1,3$ respectively.  From that theorem, it follows that whenever $R\geq 1$, the statement of that theorem gives at any point $p\in M$ the inequality
\[
\varphi^R(p,s)\leq s^3 \,\,\, \text{for all} \,\,\, s \in [0, R^{-1}T].
\]
Fixing any $t_0\in [0, T]$ and evaluating the above at $s_R:=t_0/R$ gives 
\[
\widetilde K_{\varepsilon}^p(p,t_0)-u(p,t_0)\leq t_0^3 R^{-3}. 
\]
Letting $R\to \infty$ and noting $(p, t_0)$ was chosen arbitrarily, we conclude that
\begin{equation}\label{upperboundu}\widetilde K_{\varepsilon}^p-u\leq 0 \,\,\, \text{on} \,\,\, M\times[0, T].
\end{equation}
\,

Thus, using \eqref{integrableu} we conclude 

\[\int_{M} \widetilde K_{\varepsilon}^p(x, t) \,\dV_{g(t)} \le e^{(nC_0/a)t^{a}}\int_{M} K_{\varepsilon}^p(x,0)  \,\dV_{g(0)} \]
for all $t\in [0, T]$. The proof is then readily completed by recalling $\widetilde K_{\varepsilon}^p = e^{-(16C_0 p/a)t^a)}K_{\varepsilon}^p$ and  taking $\epsilon\to 0$. \\
\end{proof}

Before concluding this section, we take the time to outline a slightly altered argument to the above in the event that the (possibly unbounded curvature) flow $g(t)$ is obtained as a local smooth limit of \textbf{bounded} curvature flows $g_i(t)$ (this is indeed the setting of our main theorem as desribed in the next section). In this setting, we are able to trade the Harnack inequality argument in the above for some careful treatment of limits. \\

We assume in this outline that the $g_i(t)$ are a sequence of bounded curvature Ricci flows, all on a uniform time interval $t\in [0,T]$, each satisfying the integrable curvature decay estimate 
\[
\|\Rm\|_{g_i(t)}\leq \frac{C_0}{t^{1-a}}.
\]
Moreover, assume $g_i(0)=g_0$ on $\Omega_i$ (defined above), and that 
\[
g_i(t) \xrightarrow{C^\infty_{loc}(M\times [0,T])} g(t).
\]
Note, in the subsequent proof, it is not necessary for each $g_i(t)$ to solve the Ricci flow \eqref{RF} on the whole space $M\times[0, T)$, but we will assume so for simplicity. Let $\phi_j$ be as in the above proof and relative to this and $g_i(t)$, define the functions $K_{i,\varepsilon}$, $u_{ij}$, $\varphi_{ij}$ $\widetilde K_{i,\varepsilon}$ also as in the above proof. \\

Then, recalling that $\phi_j=1$ on $\Omega_{j-1}$, we can say that as long as $i\geq j+1$, then we know in particular that $\widetilde K_{i,\varepsilon}^p(x,0)=u_{ij}(x,0)$, and therefore that
\begin{equation}\label{heatphiIJ} \partial_t  \varphi_{ij}\leq \Delta_{g_i(t)} \varphi_{ij}\quad \text{ and } \quad  \varphi_{ij}(x, 0)=0\;\text{ on }\Omega_j \quad  \text{ whenever } i\geq j+1.
\end{equation} 
Now fix  $p\in M$ and large $R\geq 1$. Whenever $J_R(p)$ is large enough so that $d_{g_0}(p,\partial \Omega_j)\geq 2R$ for all $j\geq J_R(p)$, we can similarly apply the local maximum principle {\cite[Theorem 1.1]{LeeTam}} to obtain
\begin{equation}
\varphi_{ij}(p,t)\leq t^3R^{-6} \; \text{ for all }\; t\in [0,T], j\geq J_R(p), i\geq j+1.
\end{equation}
Taking the limit inferior in $i$, followed by $j$, then the limit $R\to \infty$, we obtain the following inequality of functions:
\[
\liminf_{j\to \infty}\liminf_{i\to \infty}\widetilde K_{i,\varepsilon}^p\leq \liminf_{j\to \infty}\liminf_{i\to \infty} u_{ij}.
\]

Recalling that $\widetilde K_{i,\varepsilon}^p=e^{-(16C_0p/a)t^{a}}K_{i,\varepsilon}^p$ and integrating the above inequality over $M$ with respect to $g(t)$ gives
\[
    \begin{split}
&\quad e^{-(16C_0p/a)t^{a}}\int_M \liminf_{j\to \infty}\liminf_{i\to \infty} K_{i,\varepsilon}^p \;\dV_{g(t)}\leq
\int_M \liminf_{j\to \infty}\liminf_{i\to \infty} u_{ij}\,\dV_{g_i(t)}\leq
\liminf_{j\to \infty}\liminf_{i\to \infty}\int_{\Omega_j}  u_{ij}\,\dV_{g_i(t)}
\end{split}
\]
by Fatou's Lemma. The result can be concluded using the uniform bound
\[
    \int_{\Omega_j} u_{ij} \,\dV_{g_i(t)}\leq e^{(nC_0/a)t^{a}}\int_M \bigl(|\mathrm{Rm}|^2_{g_0} + \varepsilon^2 v_0^2\bigr)^{p/2} \, \,\dV_{g_0} \quad \text{ whenever } i\geq j.
\]

\section{Proof of Theorem \ref{thmmain}}\label{SectionProof}

\begin{proof}[Proof of Theorem \ref{thmmain}]

 In  \cite{LeeLiu} it was proved that given a smooth metric $g_0$ satisfying condition \ref{morreyiii} in Theorem \ref{thmmain}, there exists a solution $\hat{g}(t)$ to the Ricci De-Turck flow relative to $h$ on $M\times[0, T]$ for some $T>0$:
 \begin{equation}\label{LLmainthm}
\frac{\partial}{\partial t} \hat{g}(t)=-2\,\mathrm{Rc}(\hat{g}(t))-
\mathcal{L}_{X_h(\hat{g}(t))} \hat{g}(t),
\end{equation}
 where at any time $t$, $\mathcal{L}_{X_h(\hat{g}(t))} g(t)$ denotes the Lie derivative of the metric $\hat{g}(t)$ relative to the vector field $X_h$, whose local components (suppressing the dependence on $t$) are given by 
  \[
[X_h(g)]^k := \hat{g}^{ij}\left(\tilde{\Gamma}^k_{ij} - \Gamma^k_{ij}\right).
\]
More precisely, $\hat{g}(t)$ was constructed as a $C^{\infty}_{loc}(M\times[0, T])$ limit of a sequence
$\{\hat{g}_i(t)\}$ of complete bounded curvature solutions to the above Ricci De-Turck flow on $M\times[0, T]$ (with $\hat{g}$ replaced with $\hat{g}_i$) and with initial condition 
\[
g_{i,0} = \phi(\rho/R_i) g_0 + \bigl(1-\phi(\rho/R_i)\bigr)h.
\]
Here $\rho:M\to \mathbb{R}$ is some smooth distance like function on $M$ satisfying
\[
|\nabla_h\rho|_h^2 + |\nabla_h^2 \rho|_h \le 1
\]
and
\[
C(n,k_0)^{-1}\bigl(d_h(\cdot,p)+1\bigr)
\le \rho(\cdot) \le
C(n,k_0)\bigl(d_h(\cdot,p)+1\bigr)
\]
for some $C(n,k_0)>0$, where $k_0$ is such that $\|\mathrm{Rm}\|_h \le k_0$, and $\phi$ is some cut off function on $[0,\infty)$ such that $\phi \equiv 1$ on $[0,1]$, $\phi \equiv 0$ on $[2,+\infty)$, and $0 \le -\phi' \le 10$ and $R_i\to \infty$ is some increasing sequence.  In particular, it was shown that each $\hat{g}_i(t)$ is a bounded curvature solution to the above Ricci De-Turck flow on $M\times[0, T]$ for some $T$ independent of $i$ and satisfying
\[
 \|\Rm\|_{g_i(t)}\leq \frac{C_0}{t^{1-a}}.
\]
on $M\times[0, T]$ for some $C_0$ and $a>0$ independent of $i$ (see {\cite[Lemma 2.2]{LeeLiu}} and {\cite[Proposition 2.4]{LeeLiu}}).

 Now if for each $i$ we let $\Psi_{i, t} :M\to M$ be the time dependent family of diffeomorphisms with $\Psi_{i, 0}=\mathrm{Id}$ and local components functions satisfying $$
 \frac{\partial}{\partial t} [\Psi_{i,t}]^k = [X_h(\hat{g}_i(t))]^k$$ 
for $t\geq 0$ then it follows that $$g_i(t):=\Psi_{i,t} ^* \hat{g}_i(t)$$
solves Ricci flow \eqref{RF} on $M\times[0, T]$ while also satisfying the above curvature estimates. Moreover, it follows by the results of Chen \cite{Chen} that a subsequence of $g_i(t)$ converges in $C^{\infty}_{loc}(M\times[0, T])$ to a smooth solution $g(t)$ to Ricci flow \eqref{RF} on $M\times[0, T]$ also satisfying 

\begin{equation}\label{RFboundX}
\|\mathrm{Rm}\|_{g (t)}\leq \frac{C_0}{t^{1-a}}.
\end{equation}

This integrability of curvature as $t\to 0$ implies that the metrics $g(t)$ converge to $g(0)$ in $C^0(M)$ as $t\to 0$. As a consequence, the metrics $g(t)$ will satisfy \eqref{Sob} with some constants $C_t$ converging to $C_g$ as $t\to 0$. Moreover, the established curvature bound \eqref{RFboundX} allows us to apply Theorem \ref{upperContLp} to see that the (global) $L^{n/2}$-curvature of the $g(t)$ are upper-continuous. Thus, similarly to above, the $g(t)$ will satisfy \eqref{CC} with some constants $\sigma_t\to \sigma_0$ as $t\to 0$.  Then with $\delta_n$ being the dimensional constant in {\cite[Theorem 1.2]{ChauMartens}}, notice that
\[
\sigma_0=\delta(n)C_g^{-1}=(10^6 n^2 C_g)^{-1}< \delta_n C_g^{-1}.
\] 
In particular, this result may thus be applied to the complete bounded curvature metric $g(t)$, for any $t$ sufficiently small. Each of the desired conclusions \ref{thma} - \ref{thmd} follow from the proof of {\cite[Theorem 1.2]{ChauMartens}}, while the diffeomorphism to $\R^n$ follows (as mentioned in the introduction) by applying {\cite[Theorem 1.1]{HuangPeng}} to some shifted-time metric $g(T)$ along the flow.

\end{proof}

\end{document}